\newtheorem{lemma}{Lemma}[section]
\newtheorem{proposition}{Proposition}[section]
\newtheorem{theorem}{Theorem}[section]
\theoremstyle{definition}
\newtheorem{remark}{Remark}[section]
\def \p {\partial}
\def \O {\varOmega}
\def \e {\varepsilon}
\def \r {\theta}
\def \l {\lambda}
\def \d {\cdot}
\def \g {\Pi}
\def \n {\nabla}
\def \nh {\nabla_h}
\def \la {\Delta}
\def \ma {\mathcal}
\newcommand{\ie}{i.e.}
\newcommand{\eg}{e.g.}
\newcommand{\et}{\emph{et al.}}
\newcommand{\xkh}[1]{\left(#1\right)}
\newcommand{\zkh}[1]{\left[#1\right]}
\newcommand{\dkh}[1]{\left\{#1\right\}}
\newcommand{\ds}[1]{\int^t_0#1ds}
\newcommand{\dz}[1]{\int^1_{-1}#1dz}
\newcommand{\dk}[1]{\int^z_{-1}#1d\xi}
\newcommand{\mm}[1]{\int_{G}#1dxdy}
\newcommand{\oo}[1]{\int_{\O}#1dxdydz}
\newcommand{\norm}[1]{\left\lVert#1\right\rVert}
\newcommand{\ts}[1]{\int^t_0\int_{\O}#1dxdydzds}
\begin{document}
\title{Rigorous derivation of the full primitive equations by the scaled Boussinesq equations with rotation}
\author{{Xueke Pu,~{Wenli Zhou}}\\[1ex]
\normalsize{\it School of Mathematics and Information Science,}\\
\normalsize{\it Guangzhou University,~Guangzhou 510006,~China}\\
\normalsize{Email:~puxueke@gmail.com,~wywlzhou@163.com}}
\date{}

\maketitle
\begin{abstract}
The primitive equations of large-scale oceanic dynamics form a fundamental model in geophysical flows. It is well-known that the primitive equations can be formally derived by the hydrostatic approximation. On the other hand, the mathematically rigorous derivation of the primitive equations without coupling with the temperature is also known. In this paper, we generalize the above result from the mathematical point of view. More precisely, we prove that the scaled Boussinesq equations with rotation converge to the full primitive equations in a strong sense, globally in time, with the convergence rate $O(\e)$, as the aspect ratio $\e$ goes to zero.
\end{abstract}

\begin{center}
\begin{minipage}{120mm}
{\small{\bf AMS Subject Classification:~35Q35,~35Q86,~86A05,~86A10}}
\end{minipage}
\end{center}

\begin{center}
\begin{minipage}{120mm}
{\small{\bf Key Words:~Boussinesq equations;~Primitive equations;~Hydrostatic approximation;~Strong convergence}}
\end{minipage}
\end{center}

\section{Introduction}
For large-scale oceanic dynamics,~an important feature is that the vertical scale of ocean is much smaller than the horizontal scale.~From a physical point of view, this scale analysis means that we can use the hydrostatic approximation to simulate the motion of ocean in the vertical direction.~Due to this fact,~the primitive equations of oceanic dynamics can be formally derived from the Boussinesq equations with rotation (see \cite{rt1992,ct2007}).

Mathematically, there are already several results on the rigorous derivation of the primitive equations or the rigorous justification of the hydrostatic approximation.~The weak convergence from the Navier-Stokes equations to the primitive equations was firstly proved by Az\'{e}rad-Guill\'{e}n\cite{pa2001}, while the strong convergence from the Navier-Stokes equations to the primitive equations is due to the work of Li-Titi\cite{lt2019}, see also Furukawa \et\cite{kf2020} in the $L^p$ settings.~For the case where the limiting system is the primitive equations with only horizontal viscosity, we refer to the work of Li-Titi-Yuan \cite{yu2022}.~Note that the limiting system they obtained was the primitive equations without temperature.~Therefore, the aim of this paper is mathematically to derive the full primitive equations.

Let~$\O_\e=G\times(-\e,\e)$~be a $\e$-dependent domain,~where~$G=
(0,1)\times(0,1)$.~Here,~$\e=H/L$~is called the aspect ratio,~which measures the ratio of vertical scale $H$ to horizontal scale $L$ of the ocean. For large-scale ocean circulation,~the aspect ratio~$\e\sim 10^{-3}\ll 1$. Consider the anisotropic Boussinesq equations with rotation
\begin{equation}\label{eq:udn}
\begin{cases}
  \p_t u+(u \d \n)u+\n p-\r\vec{k}+f_0\vec{k} \times v=\frac{1}{Re_1}\la_h u+\frac{\e^2}{Re_2}\p_{zz}u,\\
  \p_t \r+u \d \n \r=\frac{1}{Rt_1}\la_h \r+\frac{\e^2}{Rt_2}\p_{zz}\r,\\
  \n \d u=0,
\end{cases}
\end{equation}in a thin domain $\O_\e$, where the velocity field $u=(v,w)=(v_1,v_2,w)$, pressure $p$ and temperature $\r$ are the unknowns. $f_0$ is the Coriolis parameter. The unit vector $\vec{k}=(0,0,1)$ points to the $z$-direction. The parameters $Re_1$, $Re_2$ , $Rt_1$ and $Rt_2$ are Reynolds numbers. Denote by $\nh=(\p_x,\p_y)$ the horizontal gradient operator. Then the horizontal Laplacian operator $\la_h$ is given by
\begin{equation*}
  \la_h=\nh \d \nh=\p_{xx}+\p_{yy}.
\end{equation*}

We firstly transform the $\e$-dependent domain $\O_\e$ into a fixed domain.~To this end,~we introduce some scaling transformations
\begin{gather*}
  u_{\e}=(v_\e,w_\e),~v_\e(x,y,z,t)=v(x,y,\e z,t),\\
  w_\e(x,y,z,t)=\frac{1}{\e}w(x,y,\e z,t),~p_\e(x,y,z,t)=p(x,y,\e z,t),\\
  \r_\e(x,y,z,t)=\e \r(x,y,\e z,t),
\end{gather*}for any~$(x,y,z)\in\O=:G\times(-1,1)$~and for any~$t\in(0,\infty)$.~Under these scalings,~the system~(\ref{eq:udn})~defined on~$\O_\e$~becomes the following scaled Boussinesq equations with rotation (SBE)
\begin{equation*}(SBE)
\begin{cases}
  \p_t v_\e-\frac{1}{Re_1}\la_h v_\e-\frac{1}{Re_2}\p_{zz}v_\e
  +(v_\e \d \nh)v_\e+w_\e \p_z v_\e+\nh p_\e+f_0\vec{k} \times v_\e=0,\\
  \e^2\xkh{\p_tw_\e-\frac{1}{Re_1}\la_h w_\e-\frac{1}{Re_2}\p_{zz}w_\e+v_\e \d \nh w_\e
  +w_\e \p_z w_\e}+\p_z p_\e-\r_\e=0,\\
  \p_t \r_\e-\frac{1}{Rt_1}\la_h \r_\e-\frac{1}{Rt_2}\p_{zz}\r_\e+v_\e \d \nh \r_\e+w_\e \p_z \r_\e=0,\\
  \nh \d v_\e+\p_z w_\e=0,
\end{cases}
\end{equation*}defined on the fixed domain~$\O$, subject to
\begin{gather}
  v_\e,w_\e,p_\e~\textnormal{and}~\r_\e~\textnormal{are periodic in}~x,y,z, \label{ga:are}\\
  (v_\e,w_\e,\r_\e)|_{t=0}=(v_0,w_0,\r_0), \label{ga:vew}\\
  v_\e,w_\e,p_\e~\textnormal{and}~\r_\e~\textnormal{are even,~odd,~even and odd in}~z,~\textnormal{respectively}.\label{eq:eve}
\end{gather}We point out that the condition (\ref{eq:eve}) is preserved by (SBE) provided that the initial data satisfy this condition.~Owing to this fact and (\ref{ga:are}), in this paper, we always assume that the initial data satisfy the following compatibility conditions
\begin{gather}
  v_0,w_0~\textnormal{and}~\r_0~\textnormal{are periodic in}~x,y,z,\\
  v_0,w_0~\textnormal{and}~\r_0~\textnormal{are even,~odd~and odd in}~z,~\textnormal{respectively}.\label{eq:xyz}
\end{gather}

Throughout the paper, we will use the same notation $L^p(\O)$ or $H^m(\O)$ to denote both a space itself and its finite product spaces.~For simplicity, we denote by notation $\norm{\d}_p$ the $L^p(\O)$ norm.

\begin{remark}\label{re:w0v}
The initial value $w_0$ for vertical velocity $w_\e$ is uniquely determined by the incompressible condition and compatibility condition (\ref{eq:xyz}), which can be represented as
\begin{equation}\label{eq:yxi}
  w_0(x,y,z)=-\dk{\nh \d v_0(x,y,\xi)},
\end{equation}for any~$(x,y) \in G$~and~$z \in (-1,1)$. From (\ref{eq:xyz}) and (\ref{eq:yxi}) it deduces that
\begin{equation*}
  \dz{\nh \d v_0(x,y,z)}=0.
\end{equation*}
\end{remark}

When the aspect ratio $\e$ goes to zero, (SBE) formally converges to the full primitive equations (FPE)
\begin{equation*}(FPE)
\begin{cases}
  \p_t v-\frac{1}{Re_1}\la_h v-\frac{1}{Re_2}\p_{zz}v+(v \d \nh)v+w \p_z v+\nh p+f_0\vec{k} \times v=0,\\
  \p_z p-\r=0,\\
  \p_t \r-\frac{1}{Rt_1}\la_h \r-\frac{1}{Rt_2}\p_{zz}\r+v \d \nh \r+w \p_z \r=0,\\
  \nh \d v+\p_z w=0,
\end{cases}
\end{equation*}corresponding to
\begin{gather}
  v,w,p~\textnormal{and}~\r~\textnormal{are periodic in}~x,y,z, \label{ga:pare}\\
  (v,\r)|_{t=0}=(v_0,\r_0), \label{ga:pvew}\\
  v,w,p~\textnormal{and}~\r~\textnormal{are even,~odd,~even and odd in}~z,~\textnormal{respectively}.\label{eq:peve}
\end{gather}

In consequence,~the aim of this paper is to prove the aspect ratio limit from (SBE) to (FPE).~It is crucial to point out that the global well-posedness of strong solutions to (FPE) with (\ref{ga:pare})-(\ref{eq:peve}) can be established by a similar argument in Cao-Titi\cite[Theorem 2]{ct2007}, for any initial data $(v_0,\r_0) \in H^1$.~Moreover,~we can also use the method in \cite[Theorem 2.1]{wz2021} to obtain this well-posedness result, which will play an important role in proving the aspect ratio limit from (SBE) to (FPE).~Further results on the study of the hydrostatic approximation are available in \cite{wz2021,pz2022}.

Next we want to recall some results concerning the primitive equations.~The primitive equations are considered as a fundamental model in geophysical fluid  dynamics (see, e.g., \cite{wm1986,jp1987,ds1996,am2003,gk2006}).~The global existence of weak solutions to the primitive equations with full viscosity and diffusivity was firstly given by Lions-Temam-Wang\cite{rt1992,jl1992,sw1995},~but uniqueness of weak solutions to this mathematical model is still unknown except for some special cases\cite{db2003,tt2010,ik2014,jl2017}.~Moreover,~the existence of global strong solutions to the primitive equations was proved by Kobelkov \cite{gm2006} and Kukavica-Ziane \cite{ik2007,mz2007} in the case of mixed Dirichlet and Neumann boundary conditions, see also Hieber-Kashiwabara\cite{mh2016}, Hieber \et~\cite{ah2016} and Giga \et~\cite{ym2020} in the $L^p$ settings.~In addition,~for the initial data lying in the critical solenoidal Besov space,~the unique global strong solution of primitive equations was obtained by Giga \et~\cite{mg2020}.

Subsequently, the global strong solution to the primitive equations was naturally studied in the cases of partial dissipation.~More details on these cases can be found in the work of Cao-Titi\cite{ct2012},~Fang-Han\cite{dy2020},~Li-Yuan\cite{li2022},~and Cao-Li-Titi\cite{cc2014,jl2014,es2016,cc2017,es2020}.~We remark that the inviscid primitive equations is known to be ill-posed in Sobolev spaces\cite{mr2009,dh2016},~and its smooth solutions may develop singularity in finite time\cite{cc2015,tk2015,si2021}.~However,~the local well-posedness of the inviscid primitive equations can be established under the assumption of initial data belonging to the space of analytic function (see, \eg, \cite{ku2011,te2022}).

Now we are to state the main result of this paper. Suppose that initial data $(v_0,\r_0) \in H^2(\O)$. Then from (\ref{eq:yxi}) it follows that $(v_0,w_0,\r_0) \in H^1(\O)$. By the similar argument as Lions-Temam-Wang \cite[Part IV]{rt1992}, there exists a unique local strong solution $(v_\e,w_\e,\r_\e)$ to (SBE) with (\ref{ga:are})-(\ref{eq:eve}). Denote by $T^*_\e$ the maximal existence time of the local strong solutions to (SBE). Let
\begin{gather*}
  (U_\e,\g_\e,P_\e)=(V_\e,W_\e,\g_\e,P_\e), \\
  (V_\e,W_\e,\g_\e,P_\e)=(v_\e-v,w_\e-w,\r_\e-\r,p_\e-p).
\end{gather*}Subtracting (FPE) from (SBE), then difference function $(V_\e,W_\e,\g_\e,P_\e)$ satisfies the following system
\begin{flalign}
  &\p_t V_\e-\frac{1}{Re_1}\la_h V_\e-\frac{1}{Re_2}\p_{zz}V_\e+(U_\e \d \n)V_\e+(u \d \n)V_\e\nonumber\\
  &+(U_\e \d \n)v+\nh P_\e+f_0\vec{k} \times V_\e=0, \label{fl:Ve0}\\
  &\e^2\xkh{\p_t W_\e-\frac{1}{Re_1}\la_h W_\e-\frac{1}{Re_2}\p_{zz}W_\e+U_\e \d \n W_\e
  +U_\e \d \n w+u \d \n W_\e}\nonumber\\
  &+\e^2\xkh{\p_t w+u \d \n w-\frac{1}{Re_1}\la_h w-\frac{1}{Re_2}\p_{zz}w}+\p_z P_\e-\g_\e=0, \label{fl:dnw}\\
  &\p_t \g_\e-\frac{1}{Rt_1}\la_h \g_\e-\frac{1}{Rt_2}\p_{zz}\g_\e+U_\e \d \n \g_\e+U_\e \d \n \r+u \d \n \g_\e=0, \label{fl:nge}\\
  &\nh \d V_\e+\p_z W_\e=0, \label{fl:zWe}
\end{flalign}defined on~$\O \times (0,T^*_\e)$. For this case, we have the following strong convergence theorem.

\begin{theorem}\label{th:b12}
Given a periodic function pair $(v_0,\r_0) \in H^2(\O)$ with $\dz{\nh \d v_0}=0$. Denote by $(v_\e,w_\e,\r_\e)$ and $(v,\r)$  the unique local strong solution of (SBE) subject to (\ref{ga:are})-(\ref{eq:eve}) and the unique global strong solution of (FPE) corresponding to (\ref{ga:pare})-(\ref{eq:peve}), respectively.\\
(i) For any $T>0$, there exists a small positive constant $\e(T)=\frac{2\l_0}{3\sqrt{\ma{M}_4(T)}}$ such that (SBE) has a unique strong solution $(v_\e,w_\e,\r_\e)$ on the time interval $[0,T]$, and that the following estimate holds
\begin{flalign*}
  &\sup_{0 \leq t \leq T}\xkh{\norm{(V_\e,\e W_\e,\g_\e)}^2_{H^1}}(t)+\int^T_0{\xkh{\frac{1}{Re_1}\norm{\nh V_\e}^2_{H^1}+\frac{1}{Re_2}\norm{\p_z V_\e}^2_{H^1}+\frac{\e^2}{Re_1}\norm{\nh W_\e}^2_{H^1}}}dt\\
  &\qquad+\int^T_0{\xkh{\frac{\e^2}{Re_2}\norm{\p_z W_\e}^2_{H^1}+\frac{1}{Rt_1}\norm{\nh \g_\e}^2_{H^1}
  +\frac{1}{Rt_2}\norm{\p_z \g_\e}^2_{H^1}}}dt \leq \e^2\ma{M}_5(T),
\end{flalign*}for every $\e \in (0,\e(T))$, where both $\ma{M}_4(t)$ and $\ma{M}_5(t)$ are the nonnegative continuously increasing functions that do not depend on $\e$;\\
(ii) As a consequence, we have the following strong convergences
\begin{gather*}
  w_\e \rightarrow w~in~L^{\infty}\xkh{0,T;L^2(\O)},(v_\e,\e w_\e,\r_\e) \rightarrow (v,0,\r)~in~L^{\infty}\xkh{0,T;H^1(\O)},\\
  \xkh{\frac{\nh v_\e}{\sqrt{Re_1}},\frac{\p_z v_\e}{\sqrt{Re_2}},\frac{\e\nh w_\e}{\sqrt{Re_1}}}
  \rightarrow \xkh{\frac{\nh v}{\sqrt{Re_1}},\frac{\p_z v}{\sqrt{Re_2}},0}~in~L^2\xkh{0,T;H^1(\O)},\\
  \xkh{w_\e,\frac{\e\p_z w_\e}{\sqrt{Re_2}},\frac{\nh \r_\e}{\sqrt{Rt_1}},\frac{\p_z \r_\e}{\sqrt{Rt_2}}}
  \rightarrow \xkh{w,0,\frac{\nh \r}{\sqrt{Rt_1}},\frac{\p_z \r}{\sqrt{Rt_2}}}~in~L^2\xkh{0,T;H^1(\O)},
\end{gather*}and the convergence rate is of the order $O(\e)$.
\end{theorem}
\begin{remark}
(i) The convergence results in Theorem \ref{th:b12} imply a rigorous justification of hydrostatic approximation.~In other words, (FPE) can be obtained by replacing the vertical momentum equation in (SBE) with the hydrostatic approximation
\begin{equation*}
  \p_z p-\r=0.
\end{equation*}

(ii) We point out that the convergence results here are global in time but not uniform in time compared with \cite{lt2019}.~The main reason is that the energy estimate on the strong solutions to the primitive equations with full viscosity and diffusivity is controlled by a nonnegative continuously increasing function with respect to time \cite{ct2007,wz2021}.

(iii) Roughly speaking, Proposition 4.2 in \cite{lt2019} in fact can be seen as performing the basic energy estimate for a difference function of weak solutions and strong solutions.~Therefore, this paper only focuses on the case where the initial data $(v_0,\r_0) \in H^2(\O)$, which allows us to deal with the problem in the framework of strong solutions.~In this case, we can carry out the $L^2$ estimate on the difference function directly (see Proposition \ref{po:b1T}).~The convergence of the global ``Leray-Hopf-type'' weak solutions of the Boussinesq equation to the global strong solutions of the primitive equations in different cases can be found in \cite{wz2021,pz2022}.


\end{remark}

The rest of this paper is arranged as follows. In section 2, we carry out the basic energy estimate on the system (\ref{fl:Ve0})-(\ref{fl:zWe}). In section 3, the first order energy estimate on this system is established under some smallness condition. The proof of Theorem \ref{th:b12} is presented in Section 4.

\section{The $L^2$ estimate on $(V_\e,\e W_\e,\g_\e)$}
Under the assumption of initial data $(v_0,\r_0) \in H^1(\O)$, the global well-posedness of strong solutions to (FPE) with Neumann boundary conditions was established by Cao-Titi\cite{ct2007}. Similarly, we also have the following well-posedness result for the case of periodic boundary conditions.

\begin{proposition}\label{po:lss}
Suppose that $(v_0,\r_0) \in H^1(\O)$ with $\dz{\nh \d v_0}=0$. Then the following assertions hold true:\\
(i) For any $T>0$,~there exists a unique strong solution $(v,\r)$ to (FPE) subject to (\ref{ga:pare})-(\ref{eq:peve}) such that
\begin{gather*}
  (v,\r) \in C([0,T];H^1(\O)) \cap L^2(0,T;H^2(\O)),\\
  (\p_t v,\p_t \r) \in L^2(0,T;L^2(\O));
\end{gather*}
(ii) The global strong solution $(v,\r)$ to (FPE) satisfies the following energy estimate
\begin{equation}\label{eq:a1t}
  \sup_{0 \leq s \leq t}\xkh{\norm{(v,\r)}^2_{H^1(\O)}}(s)
  +\ds{\xkh{\norm{(\n v,\n\r)}^2_{H^1(\O)}+\norm{(\p_t v,\p_t \r)}^2_2}} \leq \ma{M}_1(t),
\end{equation}for any $t \in [0,\infty)$,~where $\ma{M}_1(t)$ is a nonnegative continuously increasing function.
\end{proposition}

The proof of Proposition \ref{po:lss} is similar to that of \cite[Theorem 2]{ct2007}, see also \cite[Theorem 2.1]{wz2021}, so we omit it.~Next we present a crucial lemma (see \cite[Lemma 2.1]{jl2014}), which will be frequently used in the rest of this paper.
\begin{lemma}\label{le:phi}
The following inequalities hold
\begin{flalign*}
  \mm{&\xkh{\dz{\varphi(x,y,z)}} \xkh{\dz{\psi(x,y,z)\phi(x,y,z)}}}\\
  &\leq C\norm{\varphi}^{1/2}_2 \xkh{\norm{\varphi}^{1/2}_2+\norm{\nh\varphi}^{1/2}_2}
  \norm{\psi}^{1/2}_2 \xkh{\norm{\psi}^{1/2}_2+\norm{\nh\psi}^{1/2}_2} \norm{\phi}_2,
\end{flalign*}
\begin{flalign*}
  \mm{&\xkh{\dz{\varphi(x,y,z)}} \xkh{\dz{\psi(x,y,z)\phi(x,y,z)}}}\\
  &\leq C\norm{\psi}^{1/2}_2 \xkh{\norm{\psi}^{1/2}_2+\norm{\nh\psi}^{1/2}_2}
  \norm{\phi}^{1/2}_2 \xkh{\norm{\phi}^{1/2}_2+\norm{\nh\phi}^{1/2}_2} \norm{\varphi}_2,
\end{flalign*}for every~$\varphi,\psi,\phi$~such that the quantities on right-hand side make sense,~where~$C$~is a positive constant.
\end{lemma}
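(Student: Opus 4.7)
The plan is to reduce each of the three inequalities to a two-dimensional estimate on $M$ by Cauchy--Schwarz in the vertical variable, and then to close with the two-dimensional Ladyzhenskaya inequality. All three inequalities share the same initial reduction; only the H\"older redistribution in the final step differs.

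First I would bound the inner vertical integrals pointwise in $(x,y)$. Cauchy--Schwarz in $z\in(-1,1)$ gives
\[
  \dz{\varphi(x,y,z)} \leq \sqrt{2}\,A(x,y), \qquad \dz{\psi(x,y,z)\phi(x,y,z)} \leq B(x,y)\,C(x,y),
\]
where $A(x,y) := \xkh{\dz{\varphi^2(x,y,z)}}^{1/2}$, $B(x,y) := \xkh{\dz{\psi^2(x,y,z)}}^{1/2}$, and $C(x,y) := \xkh{\dz{\phi^2(x,y,z)}}^{1/2}$. The left-hand side of each of the three claimed inequalities is therefore dominated by $\mm{A(x,y)B(x,y)C(x,y)}$, and the problem reduces to estimating this 2D integral.

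Next I would apply H\"older in 2D in the form $\mm{fgh} \leq \norm{f}_{4,M}\norm{g}_{4,M}\norm{h}_{2,M}$. The three possible choices of which factor goes into $L^2(M)$ produce the three stated inequalities: putting $C$ in $L^2(M)$ yields the first, putting $B$ there yields the second, and putting $A$ there yields the third. Observe that $\norm{A}_{2,M} = \norm{\varphi}_2$, and similarly for $B$ and $C$, so the $L^2(M)$ factor already has the required form on the right-hand side.

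To estimate the two remaining $L^4(M)$ norms I would invoke the 2D Ladyzhenskaya inequality
\[
  \norm{f}_{4,M}^2 \leq C\,\norm{f}_{2,M}\xkh{\norm{f}_{2,M} + \norm{\nh f}_{2,M}}.
\]
Applied to $f = A$, the factor $\norm{A}_{2,M} = \norm{\varphi}_2$ is already in hand; for the gradient I would differentiate the identity $A^2 = \dz{\varphi^2}$ to get $A\,\nh A = \dz{\varphi\,\nh\varphi}$ and then use Cauchy--Schwarz in $z$ to deduce the pointwise bound $|\nh A|\leq \xkh{\dz{|\nh\varphi|^2}}^{1/2}$, whence $\norm{\nh A}_{2,M}\leq \norm{\nh\varphi}_2$. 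Analogous bounds hold for $B$ and $C$. Combining these with the H\"older step of the previous paragraph yields all three inequalities. The main technical point is justifying the chain rule for $\nh A$ where $A$ may vanish; this is handled by the standard regularization $A_\delta := \sqrt{A^2+\delta^2}$ and passing $\delta\to 0$. Everything else is bookkeeping, and the symmetry of the argument makes the three cases structurally identical.
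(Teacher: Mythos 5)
The paper states this lemma without proof, citing Cao--Titi \cite{ct2003}, and your argument --- Cauchy--Schwarz in $z$ to reduce the left-hand side to $\int_M ABC\,dxdy$ with $A,B,C$ the vertical $L^2$ norms, then the 2D H\"older inequality $L^4\times L^4\times L^2$ in the three possible distributions, the 2D Ladyzhenskaya inequality on the periodic domain $M$, and the pointwise bound $|\nh A|\le(\int_{-1}^1|\nh\varphi|^2dz)^{1/2}$ --- is precisely the standard proof given in that reference, and it is correct. The only point needing care, the differentiation of $A=(\int_{-1}^1\varphi^2dz)^{1/2}$ where $A$ vanishes, is handled adequately by your regularization $A_\delta=\sqrt{A^2+\delta^2}$ (or, equivalently, by Minkowski's integral inequality applied to difference quotients).
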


Based on Proposition \ref{po:lss}, we will perform the basic energy estimate on the system (\ref{fl:Ve0})-(\ref{fl:zWe}).
\begin{proposition}\label{po:b1T}
Assume that $(v_0,\r_0) \in H^1(\O)$ with $\dz{\nh \d v_0}=0$. Then the system~(\ref{fl:Ve0})-(\ref{fl:zWe})~has the following basic energy estimate
\begin{flalign*}
  &\sup_{0 \leq s \leq t}\xkh{\norm{(V_\e,\e W_\e,\g_\e)}^2_2}(s)+\ds{\xkh{\frac{1}{Re_1}\norm{\nh V_\e}^2_2+\frac{1}{Re_2}\norm{\p_z V_\e}^2_2}}\\
  &\qquad+\ds{\xkh{\frac{\e^2}{Re_1}\norm{\nh W_\e}^2_2+\frac{\e^2}{Re_2}\norm{\p_z W_\e}^2_2+\frac{1}{Rt_1}\norm{\nh \g_\e}^2_2+\frac{1}{Rt_2}\norm{\p_z \g_\e}^2_2}}\leq \e^2\ma{M}_2(t),
\end{flalign*}for any~$t \in [0,T^*_\e)$,~where
\begin{equation*}
  \ma{M}_2(t)=Ce^{\dkh{Ct+C(t+1)\zkh{\ma{M}_1(t)+\ma{M}^2_1(t)}}}
  \zkh{\xkh{1+t}^2+\ma{M}_1(t)+\ma{M}^2_1(t)}.
\end{equation*}Here~$C$~is a positive constant depending only on $Re_1$, $Re_2$, $Rt_1$ and $Rt_2$.
\end{proposition}
\begin{proof}[Proof.]
Taking the $L^2(\O)$ inner product of the third equation in (SBE) with $\r_\e$ and then integrating the resulting differential equation in time between $0$ to $t$ yield
\begin{equation}\label{eq:r02}
\frac{1}{2}\norm{\r_\e(t)}^2_2+\ds{\xkh{\frac{1}{Rt_1}\norm{\nh \r_\e}^2_2+\frac{1}{Rt_2}\norm{\p_z \r_\e}^2_2}}\leq\frac{1}{2}\norm{\r_0}^2_2.
\end{equation}We multiply the first two equation in (SBE) by $v_\e$ and $w_\e$ respectively and then integrate over $\O\times(0,t)$ to reach
\begin{flalign*}
  &\frac{1}{2}\xkh{\norm{v_\e}^2_2+\e^2\norm{w_\e}^2_2}(t)
  +\ds{\frac{1}{Re_1}\norm{\nh v_\e}^2_2} \\
  &\qquad+\ds{\xkh{\frac{1}{Re_2}\norm{\p_z v_\e}^2_2+\frac{\e^2}{Re_1}\norm{\nh w_\e}^2_2
  +\frac{\e^2}{Re_2}\norm{\p_z w_\e}^2_2}} \\
  &\qquad=\frac{1}{2}\xkh{\norm{v_0}^2_2+\e^2\norm{w_0}^2_2}+\ts{\r_\e w_\e}.
\end{flalign*}Owing to the H\"{o}lder inequality and Young inequality, from (\ref{eq:r02}) it deduces that
\begin{flalign}
  &\sup_{0 \leq s \leq t}\xkh{\norm{v_\e}^2_2+\e^2\norm{w_\e}^2_2}(s)
  +\ds{\frac{1}{Re_1}\norm{\nh v_\e}^2_2}\nonumber\\
  &\qquad+\ds{\xkh{\frac{1}{Re_2}\norm{\p_z v_\e}^2_2+\frac{\e^2}{Re_1}\norm{\nh w_\e}^2_2
  +\frac{\e^2}{Re_2}\norm{\p_z w_\e}^2_2}}\nonumber\\
  &\qquad\leq C\xkh{\norm{v_0}^2_2+\e^2\norm{w_0}^2_2+t\norm{\r_0}^2_2}.\label{fl:022}
\end{flalign}

Taking the $L^2(\O)$ inner product of (\ref{fl:Ve0}), (\ref{fl:dnw}) and (\ref{fl:nge}) with $V_\e$, $W_\e$ and $\g_\e$ respectively, we have
\begin{flalign}
  &\frac{1}{2}\frac{d}{dt}\xkh{\norm{V_\e}^2_2+\e^2\norm{W_\e}^2_2+\norm{\g_\e}^2_2}
  +\xkh{\frac{1}{Re_1}\norm{\nh V_\e}^2_2+\frac{1}{Re_2}\norm{\p_z V_\e}^2_2}\nonumber\\
  &\qquad\quad+\xkh{\frac{\e^2}{Re_1}\norm{\nh W_\e}^2_2+\frac{\e^2}{Re_2}\norm{\p_z W_\e}^2_2
  +\frac{1}{Rt_1}\norm{\nh \g_\e}^2_2+\frac{1}{Rt_2}\norm{\p_z \g_\e}^2_2}\nonumber\\
  &\qquad=\oo{\zkh{(U_\e \d \n)V_\e+(u \d \n)V_\e+(U_\e \d \n)v}\d (-V_\e)}\nonumber\\
  &\qquad\quad+\oo{\e^2(U_\e \d \n W_\e+U_\e \d \n w+u \d \n W_\e+u \d \n w)(-W_\e)}\nonumber\\
  &\qquad\quad+\oo{(U_\e \d \n \g_\e+U_\e \d \n \r+u \d \n \g_\e)(-\g_\e)}\nonumber\\
  &\qquad\quad+\oo{\zkh{\g_\e-\e^2\xkh{\p_t w-\frac{1}{Re_1}\la_h w-\frac{1}{Re_2}\p_{zz}w}}W_\e}\nonumber\\
  &\qquad=:I_1+I_2+I_3+I_4.\label{fl:Ver}
\end{flalign}For the first integral term $I_1$, using the H\"{o}lder inequality, Lemma \ref{le:phi} and Young inequality gives
\begin{flalign}
  I_1:&=\oo{\zkh{(U_\e \d \n)V_\e+(u \d \n)V_\e+(U_\e \d \n)v}\d (-V_\e)}\nonumber\\
  &=\oo{\zkh{(U_\e \d \n)v}\d (-V_\e)}=\oo{\zkh{(U_\e \d \n)V_\e}\d v}\nonumber\\
  &=\oo{\zkh{(V_\e \d \nh)V_\e\d v+(\nh \d V_\e)V_\e \d v}}\nonumber\\
  &\quad+\oo{(V_\e \d \p_z v)\xkh{\dk{\nh \d V_\e}}}\nonumber\\
  &\leq C\norm{\p_z v}^{1/2}_2\norm{(\p_z v,\nh \p_z v)}^{1/2}_2
  \norm{V_\e}^{1/2}_2\norm{(V_\e,\nh V_\e)}^{1/2}_2\norm{\nh V_\e}_2\nonumber\\
  &\quad+C\norm{v}_{H^1}\norm{V_\e}^{1/2}_2\xkh{\norm{V_\e}^{1/2}_2
  +\norm{\n V_\e}^{1/2}_2}\norm{\nh V_\e}_2\nonumber\\
  &\leq C\xkh{\norm{v}^2_{H^1}+\norm{v}^4_2+\norm{\n^2 v}^2_2+\norm{\n v}^4_2+\norm{\n v}^2_2\norm{\n^2 v}^2_2}\norm{V_\e}^2_2\nonumber\\
  &\quad+\frac{1}{8}\xkh{\frac{1}{Re_1}\norm{\nh V_\e}^2_2+\frac{1}{Re_2}\norm{\p_z V_\e}^2_2},\label{fl:ge2}
\end{flalign}note that the incompressible condition, integration by parts and Sobolev embedding theorem have been used. Thanks to the Lemma \ref{le:phi} and Young inequality, we obtain
\begin{flalign}
  I_2:&=\oo{\e^2(U_\e \d \n W_\e+U_\e \d \n w+u \d \n W_\e+u \d \n w)(-W_\e)}\nonumber\\
  &=\oo{\e^2(U_\e \d \n w+u \d \n w)(-W_\e)}\nonumber\\
  &=\oo{\e^2(u_\e \d \n w)(-W_\e)}=\oo{\e^2(u_\e \d \n W_\e)w}\nonumber\\
  &=\oo{\e^2\zkh{w(v_\e \d \nh W_\e)-ww_\e(\nh \d V_\e)}}\nonumber\\
  &=\oo{\e^2\xkh{-\dk{\nh \d v}}\zkh{(v_\e \d \nh W_\e)-w_\e(\nh \d V_\e)}}\nonumber\\
  &\leq C\e^2\xkh{\norm{v_\e}^4_2+\norm{v_\e}^2_2\norm{\nh v_\e}^2_2+\norm{\n v}^2_2\norm{\n^2 v}^2_2+\e^4\norm{w_\e}^4_2}\nonumber\\
  &\quad+C\e^6\norm{w_\e}^4_2\norm{\nh w_\e}^2_2+\frac{1}{8}\xkh{\frac{1}{Re_1}\norm{\nh V_\e}^2_2
  +\frac{\e^2}{Re_1}\norm{\nh W_\e}^2_2}.\label{fl:nve}
\end{flalign}A similar argument as the first integral term $I_1$ leads to
\begin{flalign}
  I_3:&=\oo{(U_\e \d \n \g_\e+U_\e \d \n \r+u \d \n \g_\e)(-\g_\e)}\nonumber\\
  &=\oo{\zkh{(V_\e \d \nh \g_\e)\r+(\nh \d V_\e)\g_\e\r+\xkh{\dk{\nh \d V_\e}}\g_\e\p_z \r}}\nonumber\\
  &\leq C\xkh{\norm{\r}_6\norm{V_\e}_3\norm{\nh \g_\e}_2+\norm{\r}_6\norm{\g_\e}_3\norm{\nh V_\e}_2}\nonumber\\
  &\quad+C\norm{\p_z \r}^{1/2}_2\norm{(\p_z \r,\nh \p_z \r)}^{1/2}_2
  \norm{\g_\e}^{1/2}_2\norm{(\g_\e,\nh \g_\e)}^{1/2}_2\norm{\nh V_\e}_2\nonumber\\
  &\leq C\xkh{\norm{\r}^2_{H^1}+\norm{\r}^4_2+\norm{\n^2 \r}^2_2+\norm{\n \r}^4_2
  +\norm{\n \r}^2_2\norm{\n^2 \r}^2_2}\xkh{\norm{V_\e}^2_2+\norm{\g_\e}^2_2}\nonumber\\
  &\quad+\frac{1}{8}\xkh{\frac{1}{Re_1}\norm{\nh V_\e}^2_2+\frac{1}{Re_2}\norm{\p_z V_\e}^2_2
  +\frac{1}{Rt_1}\norm{\nh \g_\e}^2_2+\frac{1}{Rt_2}\norm{\p_z \g_\e}^2_2}.\label{fl:e2r}
\end{flalign}Finally, it remains to deal with the last integral term $I_4$. We apply the integration by parts, H\"{o}lder inequality and Young inequality to write
\begin{flalign}
  I_4:&=\oo{\zkh{\g_\e-\e^2\xkh{\p_t w-\frac{1}{Re_1}\la_h w-\frac{1}{Re_2}\p_{zz}w}}W_\e}\nonumber\\
  &=\oo{\zkh{\e^2\xkh{-\dk{\p_t v}-\frac{1}{Re_1}\nh w}\d \nh W_\e}}\nonumber\\
  &\quad+\oo{\zkh{-\frac{\e^2}{Re_2}\p_z w (\p_z W_\e)-\g_\e\xkh{\dk{\nh \d V_\e}}}}\nonumber\\
  &\leq C\norm{\g_\e}^2_2+C\e^2\xkh{\norm{\p_t v}^2_2+\norm{\n v}^2_2+\norm{\n^2 v}^2_2}\nonumber\\
  &\quad+\frac{1}{8}\xkh{\frac{1}{Re_1}\norm{\nh V_\e}^2_2
  +\frac{\e^2}{Re_1}\norm{\nh W_\e}^2_2+\frac{\e^2}{Re_2}\norm{\p_z W_\e}^2_2}.\label{fl:nWe}
\end{flalign}Substituting (\ref{fl:ge2})-(\ref{fl:nWe}) into (\ref{fl:Ver}) and then applying the Gronwall inequality to the resulting differential equation, it follows from (\ref{eq:a1t}) and (\ref{fl:022}) that
\begin{flalign*}
  &\xkh{\norm{(V_\e,\e W_\e,\g_\e)}^2_2}(t)+\ds{\xkh{\frac{1}{Re_1}\norm{\nh V_\e}^2_2+\frac{1}{Re_2}\norm{\p_z V_\e}^2_2}}\\
  &\qquad\quad+\ds{\xkh{\frac{\e^2}{Re_1}\norm{\nh W_\e}^2_2+\frac{\e^2}{Re_2}\norm{\p_z W_\e}^2_2
  +\frac{1}{Rt_1}\norm{\nh \g_\e}^2_2+\frac{1}{Rt_2}\norm{\p_z \g_\e}^2_2}}\\
  &\qquad\leq\exp\bigg\{C\ds{\xkh{1+\norm{v}^2_{H^1}+\norm{v}^4_2+\norm{\n^2 v}^2_2+\norm{\n v}^4_2+\norm{\n v}^2_2\norm{\n^2 v}^2_2}}\\
  &\qquad\quad+\ds{\xkh{\norm{\r}^2_{H^1}+\norm{\r}^4_2+\norm{\n^2 \r}^2_2+\norm{\n \r}^4_2+\norm{\n \r}^2_2\norm{\n^2 \r}^2_2}}\bigg\}\\
  &\qquad\quad\times\bigg\{C\e^2\ds{\xkh{\e^4\norm{w_\e}^4_2\norm{\nh w_\e}^2_2+\norm{\p_t v}^2_2+\norm{\n v}^2_2+\norm{\n^2 v}^2_2}}\\
  &\qquad\quad+\ds{\xkh{\norm{v_\e}^4_2+\norm{v_\e}^2_2\norm{\nh v_\e}^2_2+\norm{\n v}^2_2\norm{\n^2 v}^2_2+\e^4\norm{w_\e}^4_2}}\bigg\}\\
  &\qquad\leq C\e^2e^{\dkh{Ct+C(t+1)\zkh{\ma{M}_1(t)+\ma{M}^2_1(t)}}}
  \zkh{\xkh{1+t}^2+\ma{M}_1(t)+\ma{M}^2_1(t)},
\end{flalign*}completing the proof.
\end{proof}

\section{The $L^2$ estimate on $(\n V_\e,\e\n W_\e,\n\g_\e)$}
For the purpose of establishing the first order energy estimate on the system (\ref{fl:Ve0})-(\ref{fl:zWe}), we need to carry out the second order energy estimate on (FPE) corresponding to (\ref{ga:pare})-(\ref{eq:peve}). To this end, we firstly employ (\ref{eq:peve}) to rewrite (FPE). Integrating the second equation in (FPE) with respect to $z$ yields
\begin{equation}\label{eq:pxy}
  p(x,y,z,t)=p_\nu(x,y,t)+\dk{\r(x,y,\xi,t)},
\end{equation}where~$p_\nu(x,y,t)$~represents unknown surface pressure as~$z=0$.~According to (\ref{eq:pxy}) and the incompressible condition,~we can reformulate (FPE) as
\begin{flalign}
  &\p_t v-\frac{1}{Re_1}\la_h v-\frac{1}{Re_2}\p_{zz}v+(v \d \nh)v-\xkh{\dk{\nh \d v(x,y,\xi,t)}}\p_z v\nonumber\\
  &+\nh p_\nu(x,y,t)+\dk{\nh\r(x,y,\xi,t)}+f_0 \vec{k} \times v=0, \label{fl:dkr}\\
  &\p_t \r-\frac{1}{Rt_1}\la_h \r-\frac{1}{Rt_2}\p_{zz}\r+v \d \nh \r-\xkh{\dk{\nh \d v(x,y,\xi,t)}}\p_z \r=0, \label{fl:lar}
\end{flalign}subject to
\begin{gather*}
  v~\textnormal{and}~\r~\textnormal{are periodic in}~x,y,z, \\
  (v,\r)|_{t=0}=(v_0,\r_0),\\
  v~\textnormal{and}~\r~\textnormal{are even and odd in}~z,~\textnormal{respectively}.
\end{gather*}

\begin{proposition}\label{po:a2t}
Suppose that $(v_0,\r_0) \in H^2(\O)$ with $\dz{\nh \d v_0}=0$.~Then (FPE) has the following second order energy estimate
\begin{equation*}
  \sup_{0 \leq s \leq t}\xkh{\norm{\la v}^2_2+\norm{\la \r}^2_2}(s)
  +\ds{\xkh{\norm{\n \la v}^2_2+\norm{\n \p_t v}^2_2+\norm{\n \la \r}^2_2+\norm{\n \p_t \r}^2_2}} \leq \ma{M}_3(t),
\end{equation*}for any~$t \in [0,\infty)$,~where
\begin{equation*}
  \ma{M}_3(t)=Ce^{C(t+1)\zkh{1+\ma{M}_1(t)+\ma{M}^2_1(t)}}\zkh{\norm{v_0}^2_{H^2}+\norm{\r_0}^2_{H^2}+\ma{M}_1(t)}.
\end{equation*}Here $C$ is a positive constant depending only on $Re_1$, $Re_2$, $Rt_1$, $Rt_2$ and $|f_0|$.
\end{proposition}
\begin{proof}[Proof.]
Applying the gradient operator $\n$ to the equation (\ref{fl:dkr}), then taking the dot product of the resulting equation with $\n\xkh{\p_t v-\la v}$, and finally integrating over $\O$, we have
\begin{flalign}
  \frac{1}{2}\frac{d}{dt}&\xkh{\norm{\la v}^2_2+\frac{1}{Re_1}\norm{\n\nh v}^2_2+\frac{1}{Re_2}\norm{\n\p_z v}^2_2}\nonumber\\
  &\quad+\xkh{\frac{1}{Re_1}\norm{\nh\la v}^2_2+\frac{1}{Re_2}\norm{\p_z\la v}^2_2+\norm{\n \p_t v}^2_2}\nonumber\\
  &=\oo{\n\zkh{\xkh{v \d \nh}v-\xkh{\dk{\nh \d v}} \p_z v}:\n\xkh{\la v-\p_t v}}\nonumber\\
  &\quad+\oo{\zkh{\n\xkh{\dk{\nh \r}}:\n\xkh{\la v-\p_t v}-\n(f_0\vec{k}\times v):\n\p_t v}}. \label{fl:lav}
\end{flalign}note that we have used the following fact that
\begin{gather*}
  \oo{\n\nh p_\nu(x,y,t):\n\xkh{\p_t v-\la v}}=0,\\
  \oo{\n(f_0\vec{k}\times v):\n\la v}=0.
\end{gather*}In order to estimate the first integral term on the right-hand side of (\ref{fl:lav}), the gradient operator $\n$ will be divided into two parts, $\nh$ and $\p_z$. Then we use the H\"{o}lder inequality, Sobolev embedding theorem, Lemma \ref{le:phi} and Young inequality to obtain
\begin{flalign}
  &\oo{\n\zkh{\xkh{v \d \nh}v-\xkh{\dk{\nh \d v}} \p_z v}:\n\xkh{\la v-\p_t v}}\nonumber\\
  &\qquad=\oo{\zkh{\xkh{\p_j v \d \nh}v-\xkh{\dk{\nh \d \p_j v}}\p_z v}\d\xkh{\p_j \la v-\p_j \p_t v}}\nonumber\\
  &\qquad\quad+\oo{\zkh{\xkh{v \d \nh}\p_j v-\xkh{\dk{\nh \d v}}\p_j\p_z v}\d\xkh{\p_j \la v-\p_j \p_t v}}\nonumber\\
  &\qquad\quad+\oo{\zkh{\xkh{\p_z v \d \nh}v-\xkh{\nh \d v}\p_z v}\d\xkh{\p_z \la v-\p_z \p_t v}}\nonumber\\
  &\qquad\quad+\oo{\zkh{\xkh{v \d \nh}\p_z v-\xkh{\dk{\nh \d v}}\p_{zz}v}\d\xkh{\p_z \la v-\p_z \p_t v}}\nonumber\\
  &\qquad\leq C\xkh{\norm{v}^2_{H^1}+\norm{\n^2 v}^2_2+\norm{v}^4_{H^1}+\norm{\n v}^2_2\norm{\n^2 v}^2_2}
  \xkh{\frac{1}{Re_1}\norm{\n\nh v}^2_2+\frac{1}{Re_2}\norm{\n\p_z v}^2_2}\nonumber\\
  &\qquad\quad+\frac{1}{6}\xkh{\frac{1}{Re_1}\norm{\nh\la v}^2_2+\frac{1}{Re_2}\norm{\p_z\la v}^2_2+\norm{\n \p_t v}^2_2},\label{fl:pjp}
\end{flalign}where $\p_j \in \{\p_x,\p_y\}$.~The H\"{o}lder inequality and Young inequality gives
\begin{flalign}
  &\oo{\zkh{\n\xkh{\dk{\nh \r}}:\n\xkh{\la v-\p_t v}-\n(f_0\vec{k}\times v):\n\p_t v}}\nonumber\\
  &\qquad=\oo{\zkh{\dk{\p_j \nh \r}} \d \xkh{\p_j \la v-\p_j \p_t v}}\nonumber\\
  &\qquad\quad+\oo{\zkh{\nh \r \d \xkh{\p_z \la v-\p_z \p_t v}-\n(f_0\vec{k}\times v):\n\p_t v}}\nonumber\\
  &\qquad\leq C\xkh{\norm{\n v}^2_2+\norm{\n \r}^2_2}+\frac{C}{Rt_1}\norm{\n\nh \r}^2_2\nonumber\\
  &\qquad\quad+\frac{1}{6}\xkh{\frac{1}{Re_1}\norm{\nh\la v}^2_2+\frac{1}{Re_2}\norm{\p_z\la v}^2_2+\norm{\n \p_t v}^2_2}.\label{fl:fok}
\end{flalign}By substituting (\ref{fl:pjp}) and (\ref{fl:fok}) into (\ref{fl:lav}), we reach
\begin{flalign}
  \frac{1}{2}\frac{d}{dt}&\xkh{\norm{\la v}^2_2+\frac{1}{Re_1}\norm{\n\nh v}^2_2+\frac{1}{Re_2}\norm{\n\p_z v}^2_2}\nonumber\\
  &\quad+\xkh{\frac{1}{Re_1}\norm{\nh\la v}^2_2+\frac{1}{Re_2}\norm{\p_z\la v}^2_2+\norm{\n \p_t v}^2_2}\nonumber\\
  &\leq\frac{C}{Rt_1}\norm{\n\nh \r}^2_2+\frac{1}{3}\xkh{\frac{1}{Re_1}\norm{\nh\la v}^2_2+\frac{1}{Re_2}\norm{\p_z\la v}^2_2+\norm{\n \p_t v}^2_2}\nonumber\\
  &\quad+C\xkh{\norm{v}^2_{H^1}+\norm{\n^2 v}^2_2+\norm{v}^4_{H^1}+\norm{\n v}^2_2\norm{\n^2 v}^2_2}\nonumber\\
  &\quad\times\xkh{\frac{1}{Re_1}\norm{\n\nh v}^2_2+\frac{1}{Re_2}\norm{\n\p_z v}^2_2}+C\xkh{\norm{\n v}^2_2+\norm{\n \r}^2_2}.\label{fl:Rev}
\end{flalign}

Similarly, applying the gradient operator $\n$ to the equation (\ref{fl:lar}), multiplying the resulting equation by $\n\xkh{\p_t \r-\la \r}$, and integrating over $\O$, from the H\"{o}lder inequality, Sobolev embedding theorem, Lemma \ref{le:phi} and Young inequality it deduces that
\begin{flalign}
  \frac{1}{2}\frac{d}{dt}&\xkh{\norm{\la \r}^2_2+\frac{1}{Rt_1}\norm{\n\nh \r}^2_2+\frac{1}{Rt_2}\norm{\n\p_z \r}^2_2}\nonumber\\
  &\quad+\xkh{\frac{1}{Rt_1}\norm{\nh\la \r}^2_2+\frac{1}{Rt_2}\norm{\p_z\la \r}^2_2+\norm{\n \p_t \r}^2_2}\nonumber\\
  &=\oo{\n\zkh{v \d \nh\r-\xkh{\dk{\nh \d v}} \p_z \r}\d\n\xkh{\la \r-\p_t \r}}\nonumber\\
  &\leq C\Big[\xkh{\norm{v}^2_{H^1}+\norm{\n^2 v}^2_2+\norm{v}^4_{H^1}
  +\norm{\n v}^2_2\norm{\n^2 v}^2_2+\norm{\n\r}^2_{H^1}+\norm{\n\r}^4_2}\nonumber\\
  &\quad+\norm{\n \r}^2_2\norm{\n^2 \r}^2_2\Big]\xkh{\frac{1}{Re_1}\norm{\n\nh v}^2_2
  +\frac{1}{Rt_1}\norm{\n\nh \r}^2_2+\frac{1}{Rt_2}\norm{\n\p_z \r}^2_2}\nonumber\\
  &\quad+\frac{1}{6}\xkh{\frac{1}{Re_1}\norm{\nh\la v}^2_2+\frac{1}{Rt_1}\norm{\nh\la \r}^2_2
  +\frac{1}{Rt_2}\norm{\p_z\la \r}^2_2+\norm{\n \p_t \r}^2_2}.\label{fl:Rtr}
\end{flalign}Adding (\ref{fl:Rev}) and (\ref{fl:Rtr}), and using the Gronwall inequality, we obtain
\begin{flalign*}
  &\xkh{\norm{\la v}^2_2+\frac{1}{Re_1}\norm{\n\nh v}^2_2+\frac{1}{Re_2}\norm{\n\p_z v}^2_2+\norm{\la \r}^2_2+\frac{1}{Rt_1}\norm{\n\nh \r}^2_2}(t)\nonumber\\
  &\qquad\quad+\frac{1}{Rt_2}\norm{\n\p_z \r}^2_2(t)+\ds{\xkh{\frac{1}{Re_1}\norm{\nh\la v}^2_2+\frac{1}{Re_2}\norm{\p_z\la v}^2_2+\norm{\n \p_t v}^2_2}}\nonumber\\
  &\qquad\quad+\ds{\xkh{\frac{1}{Rt_1}\norm{\nh\la \r}^2_2+\frac{1}{Rt_2}\norm{\p_z\la \r}^2_2+\norm{\n \p_t \r}^2_2}}\nonumber\\
  &\qquad\leq\exp\bigg\{C\ds{\xkh{\norm{v}^2_{H^1}+\norm{\n^2 v}^2_2+\norm{v}^4_{H^1}+\norm{\n v}^2_2\norm{\n^2 v}^2_2}}\nonumber\\
  &\qquad\quad+C\ds{\xkh{1+\norm{\n\r}^2_{H^1}+\norm{\n\r}^4_2+\norm{\n \r}^2_2\norm{\n^2 \r}^2_2}}\bigg\}\nonumber\\
  &\qquad\quad\times\bigg\{\norm{\la v_0}^2_2+\frac{1}{Re_1}\norm{\n\nh v_0}^2_2+\frac{1}{Re_2}\norm{\n\p_z v_0}^2_2+\frac{1}{Rt_1}\norm{\n\nh \r_0}^2_2\nonumber\\
  &\qquad\quad+\norm{\la \r_0}^2_2+\frac{1}{Rt_2}\norm{\n\p_z \r_0}^2_2+C\ds{\xkh{\norm{\n v}^2_2+\norm{\n \r}^2_2}}\bigg\},
\end{flalign*}which yields the result in Proposition \ref{po:a2t} by (\ref{eq:a1t}). The proof is thus completed.
\end{proof}

By means of Proposition \ref{po:lss}, \ref{po:b1T} and \ref{po:a2t}, we can perform the first order energy estimate on the system (\ref{fl:Ve0})-(\ref{fl:zWe}) under some smallness condition.

\begin{proposition}\label{po:b2T}
Assume that $(v_0,\r_0) \in H^2(\O)$ with $\dz{\nh \d v_0}=0$.~Then there exists a small positive constant $\l_0$ such that the system (\ref{fl:Ve0})-(\ref{fl:zWe}) has the following first order energy estimate
\begin{flalign*}
  &\sup_{0 \leq s \leq t}\xkh{\norm{\n(V_\e,\e W_\e,\g_\e)}^2_2}(s)+\ds{\xkh{\frac{1}{Re_1}\norm{\n\nh V_\e}^2_2
  +\frac{1}{Re_2}\norm{\n\p_z V_\e}^2_2+\frac{\e^2}{Re_1}\norm{\n\nh W_\e}^2_2}}\\
  &\qquad+\ds{\xkh{\frac{\e^2}{Re_2}\norm{\n\p_z W_\e}^2_2+\frac{1}{Rt_1}\norm{\n\nh \g_\e}^2_2
  +\frac{1}{Rt_2}\norm{\n\p_z \g_\e}^2_2}}\leq \e^2\ma{M}_4(t),
\end{flalign*}for any~$t \in [0,T^*_\e)$, provided that
\begin{equation*}
  \sup_{0 \leq s \leq t}\xkh{\norm{\n (V_\e,\g_\e)}^2_2+\e^2\norm{\n W_\e}^2_2}(s) \leq \l^2_0,
\end{equation*}where
\begin{flalign*}
  \ma{M}_4(t)&=C\exp\dkh{C(t+1)\zkh{1+\ma{M}_1(t)+\ma{M}_2(t)+\ma{M}_3(t)+\ma{M}^2_1(t)+\ma{M}^2_2(t)}}\\
  &\quad\times\dkh{\ma{M}_1(t)+\ma{M}_3(t)+(t+1)\zkh{\ma{M}^2_1(t)+\ma{M}^2_2(t)+\ma{M}^2_3(t)}}.
\end{flalign*}Here $C$ is a positive constant depending only on $Re_1$, $Re_2$, $Rt_1$ and $Rt_2$.
\end{proposition}
\begin{proof}[Proof.]
Taking the $L^2(\O)$ inner product of (\ref{fl:Ve0}), (\ref{fl:dnw}) and (\ref{fl:nge}) with $-\la V_\e$, $-\la W_\e$ and $-\la \g_\e$, respectively, then from integration by parts it follows that
\begin{flalign*}
  \frac{1}{2}\frac{d}{dt}&\xkh{\norm{\n (V_\e,\g_\e)}^2_2+\e^2\norm{\n W_\e}^2_2}
  +\xkh{\frac{1}{Re_1}\norm{\n\nh V_\e}^2_2+\frac{1}{Re_2}\norm{\n\p_z V_\e}^2_2}\\
  &\quad+\xkh{\frac{\e^2}{Re_1}\norm{\n\nh W_\e}^2_2+\frac{\e^2}{Re_2}\norm{\n\p_z W_\e}^2_2
  +\frac{1}{Rt_1}\norm{\n\nh \g_\e}^2_2+\frac{1}{Rt_2}\norm{\n\p_z \g_\e}^2_2}\\
  &=\e^2\oo{\xkh{U_\e \d \n W_\e+U_\e \d \n w+u \d \n W_\e+u \d \n w}\la W_\e}\\
  &\quad+\oo{\zkh{\e^2\xkh{\p_t w-\frac{1}{Re_1}\la_h w-\frac{1}{Re_2}\p_{zz}w}-\g_\e}\la W_\e}\\
  &\quad+\oo{(U_\e \d \n \g_\e+U_\e \d \n \r+u \d \n \g_\e)\la \g_\e}\\
  &\quad+\oo{\zkh{(U_\e \d \n)V_\e+(u \d \n)V_\e+(U_\e \d \n)v} \d \la V_\e}\\
  &=:R_1+R_2+R_3+R_4,
\end{flalign*}where we have used the following fact that
\begin{equation*}
  \oo{f_0(\vec{k}\times V_\e) \d \la V_\e}=0.
\end{equation*}Noting that the fact $|v| \leq \frac{1}{2}\dz{|v|}+\dz{|\p_{z}v|}$, we use the incompressible condition, Lemma \ref{le:phi} and Young inequality to obtain
\begin{flalign}
  R_1:&=\e^2\oo{\xkh{U_\e \d \n W_\e+U_\e \d \n w+u \d \n W_\e+u \d \n w}\la W_\e}\nonumber\\
  &=\e^2\oo{\zkh{V_\e \d \nh W_\e-(\p_z W_\e)\dk{\nh \d V_\e}}\la W_\e}\nonumber\\
  &\quad+\e^2\oo{\zkh{(\nh \d v)\dk{\nh \d V_\e}-V_\e \d \dk{\nh(\nh \d v)}}\la W_\e}\nonumber\\
  &\quad+\e^2\oo{\zkh{v \d \nh W_\e+(\nh \d V_\e)\dk{\nh \d v}}\la W_\e}\nonumber\\
  &\quad+\e^2\oo{\zkh{(\nh \d v)\dk{(\nh \d v)}-v \d \dk{\nh(\nh \d v)}}\la W_\e}\nonumber\\
  &\leq\e^2\mm{\xkh{\dz{(|V_\e|+|\p_z V_\e|)}}\xkh{\dz{|\nh W_\e||\la W_\e|}}}\nonumber\\
  &\quad+\e^2\mm{\xkh{\dz{|\nh V_\e|}}\xkh{\dz{|\p_z W_\e||\la W_\e|}}}\nonumber\\
  &\quad+\e^2\mm{\xkh{\dz{|\nh V_\e|}}\xkh{\dz{|\nh v||\la W_\e|}}}\nonumber\\
  &\quad+\e^2\mm{\xkh{\dz{|\nh^2 v|}}\xkh{\dz{|V_\e||\la W_\e|}}}\nonumber\\
  &\quad+\e^2\mm{\xkh{\dz{(|v|+|\p_z v|)}}\xkh{\dz{|\nh W_\e||\la W_\e|}}}\nonumber\\
  &\quad+\e^2\mm{\xkh{\dz{|\nh v|}}\xkh{\dz{|\nh V_\e||\la W_\e|}}}\nonumber\\
  &\quad+\e^2\mm{\xkh{\dz{|\nh v|}}\xkh{\dz{|\nh v||\la W_\e|}}}\nonumber\\
  &\quad+\e^2\mm{\xkh{\dz{|\nh^2 v|}}\xkh{\dz{|v||\la W_\e|}}}\nonumber\\
  &\leq C\bigg\{\xkh{\frac{1}{Re_1}\norm{\n\nh V_\e}^2_2+\frac{1}{Re_2}\norm{\n\p_z V_\e}^2_2
  +\frac{\e^2}{Re_1}\norm{\n\nh W_\e}^2_2+\frac{\e^2}{Re_2}\norm{\n\p_z W_\e}^2_2}\nonumber\\
  &\quad+\zkh{(1+\e^2)\norm{V_\e}^2_2+\norm{\nh V_\e}^2_2+\norm{\p_z V_\e}^2_2
  +\norm{V_\e}^4_2+\norm{V_\e}^2_2\norm{\nh V_\e}^2_2}\nonumber\\
  &\quad+\zkh{\norm{v}^2_{H^1}+\norm{\n^2 v}^2_2+\norm{v}^4_2
  +(1+\e^4)\norm{v}^2_{H^1}\norm{\n v}^2_{H^1}}\bigg\}\norm{\n (V_\e,\e W_\e)}^2_2\nonumber\\
  &\quad+C\e^2\xkh{\norm{v}^4_2+\norm{v}^2_{H^1}\norm{\n v}^2_{H^1}
  +\norm{\n^2 v}^4_2+\norm{\n^2 v}^2_2\norm{\n\la v}^2_2+\norm{V_\e}^4_2}\nonumber\\
  &\quad+\frac{1}{48}\xkh{\frac{1}{Re_1}\norm{\n\nh V_\e}^2_2+\frac{\e^2}{Re_1}\norm{\n\nh W_\e}^2_2
  +\frac{\e^2}{Re_2}\norm{\n\p_z W_\e}^2_2}.\label{fl:laV}
\end{flalign}Due to the the incompressible condition, H\"{o}lder inequality and Young inequality, we reach
\begin{flalign}
  R_2:&=\oo{\zkh{\e^2\xkh{\p_t w-\frac{1}{Re_1}\la_h w-\frac{1}{Re_2}\p_{zz}w}-\g_\e}\la W_\e}\nonumber\\
  &=\e^2\oo{\xkh{-\dk{\nh\d\p_t v}+\frac{1}{Re_1}\dk{\la_h(\nh \d v)}+\frac{1}{Re_2}\nh \d \p_z v}\la W_\e}\nonumber\\
  &\quad+\oo{\zkh{-\nh \g_\e \d \xkh{\dk{\nh(\nh \d V_\e)}}-\p_z\g_\e\xkh{\nh \d V_\e}}}\nonumber\\
  &\leq C\e^2\xkh{\norm{\n \p_t v}^2_2+\norm{\n^2 v}^2_2+\norm{\n \la v}^2_2}+C\xkh{\norm{\n V_\e}^2_2+\norm{\n \g_\e}^2_2}\nonumber\\
  &\quad+\frac{1}{48}\xkh{\frac{1}{Re_1}\norm{\n\nh V_\e}^2_2
  +\frac{\e^2}{Re_1}\norm{\n\nh W_\e}^2_2+\frac{\e^2}{Re_2}\norm{\n\p_z W_\e}^2_2}.\label{fl:law}
\end{flalign}By the similar method as the integral term $R_1$, the integral term $R_3$ can be bounded as
\begin{flalign}
  R_3:&=\oo{(U_\e \d \n \g_\e+U_\e \d \n \r+u \d \n \g_\e)\la \g_\e}\nonumber\\
  &=\oo{\zkh{V_\e \d \nh \g_\e-(\p_z \g_\e)\dk{\nh \d V_\e}+V_\e \d \nh \r}\la\g_\e}\nonumber\\
  &\quad+\oo{\zkh{v \d \nh\g_\e-(\p_z \r)\dk{\nh \d V_\e}-(\p_z \g_\e)\dk{\nh \d v}}\la\g_\e}\nonumber\\
  &\leq C\bigg\{\xkh{\frac{1}{Re_1}\norm{\n\nh V_\e}^2_2+\frac{1}{Re_2}\norm{\n\p_z V_\e}^2_2
  +\frac{1}{Rt_1}\norm{\n\nh \g_\e}^2_2+\frac{1}{Rt_2}\norm{\n\p_z \g_\e}^2_2}\nonumber\\
  &\quad+\xkh{\norm{v}^4_2+\norm{\n^2 v}^2_2+\norm{v}^2_{H^1}\norm{\n v}^2_{H^1}
  +\norm{\n\r}^2_{H^1}+\norm{\r}^2_{H^1}\norm{\n \r}^2_{H^1}}\nonumber\\
  &\quad+\xkh{\norm{V_\e}^4_2+\norm{\nh V_\e}^2_2+\norm{\nh \g_\e}^2_2+\norm{V_\e}^2_2\norm{\nh V_\e}^2_2}\bigg\}
  \xkh{\norm{\n V_\e}^2_2+\norm{\n \g_\e}^2_2}\nonumber\\
  &\quad+C\norm{V_\e}^2_2\norm{\n^2\r}^2_2+\frac{1}{48}\xkh{\frac{1}{Re_1}\norm{\n\nh V_\e}^2_2
  +\frac{1}{Re_2}\norm{\n\p_z V_\e}^2_2}\nonumber\\
  &\quad+\frac{1}{48}\xkh{\frac{1}{Rt_1}\norm{\n\nh \g_\e}^2_2+\frac{1}{Rt_2}\norm{\n\p_z \g_\e}^2_2}.\label{fl:221}
\end{flalign}Similarly, we have
\begin{flalign}
  R_4:&=\oo{\zkh{(U_\e \d \n)V_\e+(u \d \n)V_\e+(U_\e \d \n)v} \d \la V_\e}\nonumber\\
  &=\oo{\zkh{(V_\e \d \nh)V_\e-(\p_z V_\e)\dk{\nh \d V_\e}} \d \la V_\e}\nonumber\\
  &\quad+\oo{\zkh{(V_\e \d \nh)v-(\p_z v)\dk{\nh \d V_\e}} \d \la V_\e}\nonumber\\
  &\quad+\oo{\zkh{(v \d \nh)V_\e-(\p_z V_\e)\dk{\nh \d v}} \d \la V_\e}\nonumber\\
  &\leq C\xkh{\norm{v}^2_{H^1}\norm{\n v}^2_{H^1}+\norm{V_\e}^4_2
  +\norm{\p_z V_\e}^2_2+\norm{V_\e}^2_2\norm{\nh V_\e}^2_2}\norm{\n V_\e}^2_2\nonumber\\
  &\quad+C\xkh{\norm{v}^4_2+\norm{\n v}^2_{H^1}+\frac{1}{Re_1}\norm{\n\nh V_\e}^2_2
  +\frac{1}{Re_2}\norm{\n\p_z V_\e}^2_2}\norm{\n V_\e}^2_2\nonumber\\
  &\quad+C\norm{\n^2 v}^2_2\norm{V_\e}^2_2+\frac{1}{48}\xkh{\frac{1}{Re_1}\norm{\n\nh V_\e}^2_2
  +\frac{1}{Re_2}\norm{\n\p_z V_\e}^2_2}.\label{fl:nla}
\end{flalign}Adding (\ref{fl:laV}), (\ref{fl:law}), (\ref{fl:221}) and (\ref{fl:nla}) yields
\begin{flalign}
  \frac{1}{2}\frac{d}{dt}&\xkh{\norm{\n (V_\e,\g_\e)}^2_2+\e^2\norm{\n W_\e}^2_2}
  +\frac{11}{12}\xkh{\frac{1}{Re_1}\norm{\n\nh V_\e}^2_2+\frac{1}{Re_2}\norm{\n\p_z V_\e}^2_2}\nonumber\\
  &\quad+\frac{11}{12}\xkh{\frac{\e^2}{Re_1}\norm{\n\nh W_\e}^2_2+\frac{\e^2}{Re_2}\norm{\n\p_z W_\e}^2_2
  +\frac{1}{Rt_1}\norm{\n\nh \g_\e}^2_2+\frac{1}{Rt_2}\norm{\n\p_z \g_\e}^2_2}\nonumber\\
  &\leq C_\sigma\bigg\{\xkh{\frac{1}{Re_1}\norm{\n\nh V_\e}^2_2+\frac{1}{Re_2}\norm{\n\p_z V_\e}^2_2
  +\frac{\e^2}{Re_1}\norm{\n\nh W_\e}^2_2+\frac{\e^2}{Re_2}\norm{\n\p_z W_\e}^2_2}\nonumber\\
  &\quad+\zkh{\frac{1}{Rt_1}\norm{\n\nh \g_\e}^2_2+\frac{1}{Rt_2}\norm{\n\p_z \g_\e}^2_2
  +\xkh{\norm{\nh \g_\e}^2_2+\norm{\n\r}^2_{H^1}+\norm{\r}^2_{H^1}\norm{\n \r}^2_{H^1}}}\nonumber\\
  &\quad+\zkh{1+(1+\e^2)\norm{V_\e}^2_2+\norm{\nh V_\e}^2_2+\norm{\p_z V_\e}^2_2
  +\norm{V_\e}^4_2+\norm{V_\e}^2_2\norm{\nh V_\e}^2_2}\nonumber\\
  &\quad+\zkh{\norm{v}^2_{H^1}+\norm{\n^2 v}^2_2+\norm{v}^4_2
  +(1+\e^4)\norm{v}^2_{H^1}\norm{\n v}^2_{H^1}}\bigg\}\norm{\n (V_\e,\e W_\e,\g_\e)}^2_2\nonumber\\
  &\quad+C_\sigma\e^2\xkh{\norm{v}^4_2+\norm{v}^2_{H^1}\norm{\n v}^2_{H^1}
  +\norm{\n^2 v}^4_2+\norm{\n^2 v}^2_2\norm{\n\la v}^2_2+\norm{V_\e}^4_2}\nonumber\\
  &\quad+C_\sigma\e^2\xkh{\norm{\n \p_t v}^2_2+\norm{\n^2 v}^2_2+\norm{\n \la v}^2_2}
  +C_\sigma\xkh{\norm{\n^2 v}^2_2+\norm{\n^2 \r}^2_2}\norm{V_\e}^2_2.\label{fl:VWr}
\end{flalign}Setting $\l_0=\sqrt{\frac{5}{12C_\sigma}}$, and then using the following smallness condition
\begin{equation*}
  \sup_{0 \leq s \leq t}\xkh{\norm{\n (V_\e,\g_\e)}^2_2+\e^2\norm{\n W_\e}^2_2}(s) \leq \l^2_0,
\end{equation*}it deduces from (\ref{fl:VWr}) that
\begin{flalign*}
  \frac{d}{dt}&\xkh{\norm{\n (V_\e,\g_\e)}^2_2+\e^2\norm{\n W_\e}^2_2}
  +\xkh{\frac{1}{Re_1}\norm{\n\nh V_\e}^2_2+\frac{1}{Re_2}\norm{\n\p_z V_\e}^2_2}\\
  &\quad+\xkh{\frac{\e^2}{Re_1}\norm{\n\nh W_\e}^2_2+\frac{\e^2}{Re_2}\norm{\n\p_z W_\e}^2_2
  +\frac{1}{Rt_1}\norm{\n\nh \g_\e}^2_2+\frac{1}{Rt_2}\norm{\n\p_z \g_\e}^2_2}\\
  &\leq C_\sigma\bigg\{\zkh{1+\norm{\n\r}^2_{H^1}+\norm{\r}^2_{H^1}\norm{\n \r}^2_{H^1}
  +\norm{\nh \g_\e}^2_2+(1+\e^2)\norm{V_\e}^2_2}\nonumber\\
  &\quad+\xkh{\norm{\nh V_\e}^2_2+\norm{\p_z V_\e}^2_2
  +\norm{V_\e}^4_2+\norm{V_\e}^2_2\norm{\nh V_\e}^2_2+\norm{v}^2_{H^1}+\norm{\n^2 v}^2_2}\nonumber\\
  &\quad+\zkh{\norm{v}^4_2+(1+\e^4)\norm{v}^2_{H^1}\norm{\n v}^2_{H^1}}\bigg\}
  \norm{\n (V_\e,\e W_\e,\g_\e)}^2_2\nonumber\\
  &\quad+C_\sigma\e^2\xkh{\norm{v}^4_2+\norm{v}^2_{H^1}\norm{\n v}^2_{H^1}
  +\norm{\n^2 v}^4_2+\norm{\n^2 v}^2_2\norm{\n\la v}^2_2+\norm{V_\e}^4_2}\nonumber\\
  &\quad+C_\sigma\e^2\xkh{\norm{\n \p_t v}^2_2+\norm{\n^2 v}^2_2+\norm{\n \la v}^2_2}
  +C_\sigma\xkh{\norm{\n^2 v}^2_2+\norm{\n^2 \r}^2_2}\norm{V_\e}^2_2.
\end{flalign*}By virtue of the Gronwall inequality, from Proposition \ref{po:lss}, \ref{po:b1T} and \ref{po:a2t} it follows that
\begin{flalign*}
  &\xkh{\norm{\n (V_\e,\g_\e)}^2_2+\e^2\norm{\n W_\e}^2_2}(t)
  +\ds{\xkh{\frac{1}{Re_1}\norm{\n\nh V_\e}^2_2+\frac{1}{Re_2}\norm{\n\p_z V_\e}^2_2}}\\
  &\quad\quad+\ds{\xkh{\frac{\e^2}{Re_1}\norm{\n\nh W_\e}^2_2+\frac{\e^2}{Re_2}\norm{\n\p_z W_\e}^2_2
  +\frac{1}{Rt_1}\norm{\n\nh \g_\e}^2_2+\frac{1}{Rt_2}\norm{\n\p_z \g_\e}^2_2}}\\
  &\quad\leq C_\sigma\exp\bigg\{C_\sigma\ds{\xkh{1+\norm{\n\r}^2_{H^1}
  +\norm{\r}^2_{H^1}\norm{\n \r}^2_{H^1}+\norm{\nh \g_\e}^2_2}}\nonumber\\
  &\quad\quad+C_\sigma\ds{\zkh{(1+\e^2)\norm{V_\e}^2_2+\norm{\nh V_\e}^2_2+\norm{\p_z V_\e}^2_2
  +\norm{V_\e}^4_2+\norm{V_\e}^2_2\norm{\nh V_\e}^2_2}}\nonumber\\
  &\quad\quad+C_\sigma\ds{\zkh{\norm{v}^2_{H^1}+\norm{\n^2 v}^2_2+\norm{v}^4_2
  +(1+\e^4)\norm{v}^2_{H^1}\norm{\n v}^2_{H^1}}}\bigg\}\nonumber\\
  &\quad\quad\times\bigg\{\e^2\ds{\xkh{\norm{v}^4_2+\norm{v}^2_{H^1}\norm{\n v}^2_{H^1}
  +\norm{\n^2 v}^4_2+\norm{\n^2 v}^2_2\norm{\n\la v}^2_2+\norm{V_\e}^4_2}}\nonumber\\
  &\quad\quad+\e^2\ds{\xkh{\norm{\n \p_t v}^2_2+\norm{\n^2 v}^2_2+\norm{\n \la v}^2_2}}
  +\ds{\xkh{\norm{\n^2 v}^2_2+\norm{\n^2 \r}^2_2}\norm{V_\e}^2_2}\bigg\}\nonumber\\
  &\quad\leq C_\sigma \e^2\exp\dkh{C_\sigma(t+1)\zkh{1+\ma{M}_1(t)+\ma{M}_2(t)
  +\ma{M}_3(t)+\ma{M}^2_1(t)+\ma{M}^2_2(t)}}\nonumber\\
  &\quad\quad\times\dkh{\ma{M}_1(t)+\ma{M}_3(t)+(t+1)\zkh{\ma{M}^2_1(t)+\ma{M}^2_2(t)+\ma{M}^2_3(t)}}.
\end{flalign*}note that we have used the fact $(V_\e,W_\e,\g_\e)|_{t=0}=0$. This completes the proof.
\end{proof}

\section{Proof of Theorem~\ref{th:b12}}
With the help of Proposition \ref{po:b1T} and \ref{po:b2T}, we give the proof of Theorem \ref{th:b12}. Note that we also need to eliminate the effect of the smallness condition in Proposition \ref{po:b2T}.~This can be achieved by the following proposition.
\begin{proposition}\label{po:b12}
Denote by $T^*_\e$ the maximal existence time of the strong solution $(v_\e,w_\e,\r_\e)$ to (SBE) with (\ref{ga:are})-(\ref{eq:eve}). Then, for any $T>0$, there is a small positive constant $\e(T)=\frac{2\l_0}{3\sqrt{\ma{M}_4(T)}}$ such that $T^*_\e>T$ as long as $\e \in (0,\e(T))$. Furthermore, the following energy estimate holds
\begin{flalign*}
  &\sup_{0 \leq s \leq t}\xkh{\norm{(V_\e,\e W_\e,\g_\e)}^2_{H^1}}(s)+\ds{\xkh{\frac{1}{Re_1}\norm{\nh V_\e}^2_{H^1}+\frac{1}{Re_2}\norm{\p_z V_\e}^2_{H^1}+\frac{\e^2}{Re_1}\norm{\nh W_\e}^2_{H^1}}}\\
  &\qquad+\ds{\xkh{\frac{\e^2}{Re_2}\norm{\p_z W_\e}^2_{H^1}+\frac{1}{Rt_1}\norm{\nh \g_\e}^2_{H^1}+\frac{1}{Rt_2}\norm{\p_z \g_\e}^2_{H^1}}}\leq \e^2\xkh{\ma{M}_2(t)+\ma{M}_4(t)},
\end{flalign*}for any $t \in [0,T]$, where both $\ma{M}_2(t)$ and $\ma{M}_4(t)$ are nonnegative continuously increasing functions that do not depend on $\e$.
\end{proposition}
\begin{proof}[Proof.]
For any $T>0$, we set $\ma{T}'_\e=\min\{T^*_\e,T\}$. Then it follows from Proposition \ref{po:b1T} that
\begin{flalign}
  &\sup_{0 \leq s \leq t}\xkh{\norm{(V_\e,\e W_\e,\g_\e)}^2_2}(s)+\ds{{\xkh{\frac{1}{Re_1}\norm{\nh V_\e}^2_2+\frac{1}{Re_2}\norm{\p_z V_\e}^2_2+\frac{\e^2}{Re_1}\norm{\nh W_\e}^2_2}}}\nonumber\\
  &\qquad+\ds{\xkh{\frac{\e^2}{Re_2}\norm{\p_z W_\e}^2_2+\frac{1}{Rt_1}\norm{\nh \g_\e}^2_2+\frac{1}{Rt_2}\norm{\p_z \g_\e}^2_2}} \leq \e^2\ma{M}_2(t),\label{eq:ta1}
\end{flalign}for every $t \in [0,\ma{T}'_\e)$, where
\begin{equation*}
  \ma{M}_2(t)=Ce^{\dkh{Ct+C(t+1)\zkh{\ma{M}_1(t)+\ma{M}^2_1(t)}}}
  \zkh{\xkh{1+t}^2+\ma{M}_1(t)+\ma{M}^2_1(t)}.
\end{equation*}Define
\begin{equation*}
  t'_\e:=\sup\dkh{t \in (0,\ma{T}'_\e)\bigg|\sup_{0 \leq s \leq t}\xkh{\norm{\n (V_\e,\e W_\e,\g_\e)}^2_2}(s) \leq \l^2_0},
\end{equation*}note that $\l_0$ is the small positive constant in Proposition \ref{po:b2T}. Thanks to Proposition \ref{po:b2T}, we have the following estimate
\begin{flalign}
  &\sup_{0 \leq s \leq t}\xkh{\norm{\n(V_\e,\e W_\e,\g_\e)}^2_2}(s)+\ds{\xkh{\frac{1}{Re_1}\norm{\n\nh V_\e}^2_2
  +\frac{1}{Re_2}\norm{\n\p_z V_\e}^2_2+\frac{\e^2}{Re_1}\norm{\n\nh W_\e}^2_2}}\nonumber\\
  &\qquad+\ds{\xkh{\frac{\e^2}{Re_2}\norm{\n\p_z W_\e}^2_2+\frac{1}{Rt_1}\norm{\n\nh \g_\e}^2_2
  +\frac{1}{Rt_2}\norm{\n\p_z \g_\e}^2_2}} \leq \e^2\ma{M}_4(t),\label{eq:ta2}
\end{flalign}for every $t \in [0,t'_\e)$, where
\begin{flalign*}
  \ma{M}_4(t)&=C\exp\dkh{C(t+1)\zkh{1+\ma{M}_1(t)+\ma{M}_2(t)+\ma{M}_3(t)+\ma{M}^2_1(t)+\ma{M}^2_2(t)}}\\
  &\quad\times\dkh{\ma{M}_1(t)+\ma{M}_3(t)+(t+1)\zkh{\ma{M}^2_1(t)+\ma{M}^2_2(t)+\ma{M}^2_3(t)}}.
\end{flalign*}Let $\e(T)=\frac{2\l_0}{3\sqrt{\ma{M}_4(T)}}$. Owing to (\ref{eq:ta2}), we reach
\begin{flalign*}
  &\sup_{0 \leq s \leq t}\xkh{\norm{\n(V_\e,\e W_\e,\g_\e)}^2_2}(s)+\ds{\xkh{\frac{1}{Re_1}\norm{\n\nh V_\e}^2_2
  +\frac{1}{Re_2}\norm{\n\p_z V_\e}^2_2}}\\
  &\qquad+\ds{\xkh{\frac{\e^2}{Re_1}\norm{\n\nh W_\e}^2_2+\frac{\e^2}{Re_2}\norm{\n\p_z W_\e}^2_2
  +\frac{1}{Rt_1}\norm{\n\nh \g_\e}^2_2+\frac{1}{Rt_2}\norm{\n\p_z \g_\e}^2_2}}\\
  &\qquad\leq \frac{4\l^2_0\ma{M}_4(t)}{9\ma{M}_4(T)}\leq\frac{4\l^2_0}{9}<\l^2_0,
\end{flalign*}for every $t \in [0,t'_\e)$, and for every $\e \in (0,\e(T))$, which leads to
\begin{equation}\label{eq:l20}
  \sup_{0 \leq s < t'_\e}\xkh{\norm{\n(V_\e,\e W_\e,\g_\e)}^2_2}(s)<\l^2_0.
\end{equation}The definition of $t'_\e$ and (\ref{eq:l20}) imply that $t'_\e=\ma{T}'_\e$. On account of this, combining (\ref{eq:ta1}) with (\ref{eq:ta2}) yields
\begin{flalign}
  &\sup_{0 \leq s \leq t}\xkh{\norm{(V_\e,\e W_\e,\g_\e)}^2_{H^1}}(s)+\ds{\xkh{\frac{1}{Re_1}\norm{\nh V_\e}^2_{H^1}+\frac{1}{Re_2}\norm{\p_z V_\e}^2_{H^1}+\frac{\e^2}{Re_1}\norm{\nh W_\e}^2_{H^1}}}\nonumber\\
  &\quad+\ds{\xkh{\frac{\e^2}{Re_2}\norm{\p_z W_\e}^2_{H^1}+\frac{1}{Rt_1}\norm{\nh \g_\e}^2_{H^1}+\frac{1}{Rt_2}\norm{\p_z \g_\e}^2_{H^1}}} \leq \e^2\xkh{\ma{M}_2(t)+\ma{M}_4(t)},\label{fl:M24}
\end{flalign}for every $t \in [0,\ma{T}'_\e)$, and for every $\e \in (0,\e(T))$.

Next, it suffices to prove that $T^*_\e>T$ for every $\e \in (0,\e(T))$. Assume that $T^*_\e \leq T$, \ie, $\ma{T}'_\e=\min\{T^*_\e,T\}=T^*_\e$. Then it is clear that
\begin{equation}\label{eq:inf}
  \limsup_{t' \rightarrow (\ma{T}'_\e)^-}\xkh{\norm{(v_\e,\e w_\e,\r_\e)}^2_{H^1}}(t')=\infty,
\end{equation}since $T^*_\e$ is the maximal existence time of the strong solution $(v_\e,w_\e,\r_\e)$ to (SBE). By virtue of Proposition \ref{eq:a1t} and \ref{po:a2t}, the following estimate holds
\begin{flalign*}
  &\sup_{0 \leq t' \leq t}\xkh{\norm{(v_\e,\e w_\e,\r_\e)}^2_{H^1}}(t')\\
  &\qquad\leq\sup_{0 \leq t' \leq t}\xkh{\norm{(v,\e w,\r)}^2_{H^1}}(t')
  +\sup_{0 \leq t' \leq t}\xkh{\norm{(V_\e,\e W_\e,\g_\e)}^2_{H^1}}(t')\\
  &\qquad\leq \ma{M}_1(t)+\e^2(\ma{M}_1(t)+\ma{M}_3(t))+\e^2\xkh{\ma{M}_2(t)+\ma{M}_4(t)},
\end{flalign*}for every $t \in [0,\ma{T}'_\e)$, where the incompressible condition and estimate (\ref{fl:M24}) are used. It is obvious that (\ref{eq:inf}) contradicts to the above estimate. This contradiction deduces that $T^*_\e>T$ and hence $\ma{T}'_\e=T$, completing the proof of Proposition \ref{po:b12}.
\end{proof}

Proof of Theorem~\ref{th:b12} is shown as follows.
\begin{proof}[Proof of Theorem~\ref{th:b12}.]
For any $T>0$, from Proposition \ref{po:b12} it follows that (SBE) corresponding to (\ref{ga:are})-(\ref{eq:eve}) exists a unique strong solution $(v_\e,w_\e,\r_\e)$ on the time interval $[0,T]$ for every $\e$ $\in $ $(0,\e(T))$. Moreover, the following estimate holds
\begin{flalign*}
  &\sup_{0 \leq t \leq T}\xkh{\norm{(V_\e,\e W_\e,\g_\e)}^2_{H^1}}(t)+\int^T_0{\xkh{\frac{1}{Re_1}\norm{\nh V_\e}^2_{H^1}+\frac{1}{Re_2}\norm{\p_z V_\e}^2_{H^1}}}dt\\
  &\qquad+\int^T_0{\xkh{\frac{\e^2}{Re_1}\norm{\nh W_\e}^2_{H^1}+\frac{\e^2}{Re_2}\norm{\p_z W_\e}^2_{H^1}
  +\frac{1}{Rt_1}\norm{\nh \g_\e}^2_{H^1}+\frac{1}{Rt_2}\norm{\p_z \g_\e}^2_{H^1}}}dt\\
  &\qquad\leq \e^2\xkh{\ma{M}_2(T)+\ma{M}_4(T)}=:\e^2\ma{M}_5(T),
\end{flalign*}where $\ma{M}_5(t)$ is a nonnegative continuously increasing function that does not depend on $\e$. By virtue of the above estimate, we obtain the following strong convergences
\begin{gather*}
  (v_\e,\e w_\e,\r_\e) \rightarrow (v,0,\r)~in~L^{\infty}\xkh{0,T;H^1(\O)},\\
  \xkh{\frac{\nh v_\e}{\sqrt{Re_1}},\frac{\p_z v_\e}{\sqrt{Re_2}},\frac{\e\nh w_\e}{\sqrt{Re_1}}}
  \rightarrow \xkh{\frac{\nh v}{\sqrt{Re_1}},\frac{\p_z v}{\sqrt{Re_2}},0}~in~L^2\xkh{0,T;H^1(\O)},\\
  \xkh{\frac{\e\p_z w_\e}{\sqrt{Re_2}},\frac{\nh \r_\e}{\sqrt{Rt_1}},\frac{\p_z \r_\e}{\sqrt{Rt_2}}}
  \rightarrow \xkh{0,\frac{\nh \r}{\sqrt{Rt_1}},\frac{\p_z \r}{\sqrt{Rt_2}}}~in~L^2\xkh{0,T;H^1(\O)}.
\end{gather*}Due to the incompressible condition, it deduces from $\frac{\nh v_\e}{\sqrt{Re_1}} \rightarrow \frac{\nh v}{\sqrt{Re_1}}~in~L^2\xkh{0,T;H^1(\O)}$ and $v_\e \rightarrow v~in~L^{\infty}\xkh{0,T;H^1(\O)}$ that
\begin{gather*}
  w_\e \rightarrow w~in~L^2\xkh{0,T;H^1(\O)},\\
  w_\e \rightarrow w~in~L^{\infty}\xkh{0,T;L^2(\O)},
\end{gather*}respectively.~In addition, it is obvious that the convergence rate is of the order $O(\e)$. Theorem \ref{th:b12} is thus proved.
\end{proof}

\noindent{\bf Acknowledgments.~}\small
The work of X. Pu was supported in part by the National Natural Science Foundation of China (No. 11871172) and the Natural Science Foundation of Guangdong Province of China (No. 2019A1515012000). The work of W. Zhou was supported by the Innovation Research for the Postgraduates of Guangzhou University (No. 2021GDJC-D09).


\end{document}